\def\&{&=& }
\def\bar{\overline}
\def\spc{strictly pseudo-convex\ }
\def\nbd{neighborhood\ }
\def\iff{if and only if\ }
\def\tonezero{T^{1,0}}
\def\t01{T^{0,1}}
\def\lie{\pounds}
\def\ctm{{{\bf{C}}\otimes TM}} 
\def\ctmprime{{\bf{C}\otimes TM^\prime}}
\def\cth{{\bf{C}}\otimes H}
\def\bR{{\bf R}}
\def\bC{{\bf C}}
\def\be{\begin{equation}}
\def\ee{\end{equation}}
\def\bea{\begin{eqnarray}}
\def\eea{\end{eqnarray}}
\def\beastar{\begin{eqnarray*}}
\def\eeastar{\end{eqnarray*}}
\newtheorem{theorem}{Theorem}[section]
\newtheorem{conjecture}{Conjecture}[section]
\newtheorem{lemma}{Lemma}[section]
\newtheorem*{definition}{Definition}
\newtheorem{proposition}{Proposition}
\newtheorem*{example}{Example}
\def\blfootnote{\xdef\@thefnmark{}\@footnotetext}
\newcommand{\abs}[1]{\lvert#1\rvert}
\numberwithin{equation}{section}  
\begin{document}
\title[Conjecture]{A Conjecture of Trautman}\thanks{The present article is based on a talk by the author at the University  of Basilicata in Fall 2018.  The author would like to express his appreciation of the courtesy shown to him by Professors Barletta and Dragomir during his visit.}
\author{Howard Jacobowitz}
\address{Department of Mathematical Sciences, Rutgers University, Camden New Jersey\\jacobowi@rutgers.edu}
\date{\today   }
\maketitle
\section{}
\medskip
In 1998 the physicist Andre Trautman conjectured that a three-dimensional CR manifold is locally realizable \iff  its canonical bundle admits a closed nowhere zero section.  First  we review the relevant definitions and in the next section give the physical context.  In Section 3 we outline the earlier results in \cite{J} which had proved a weak version of the Conjecture.  

A {\bf CR structure} on a three-dimensional manifold $M$ is a two-plane distribution $H\subset TM$ and a fiber preserving  anti-involution $J:H\to H$.  We denote this structure by $(M,H,J)$.  It is often useful to extend $J$ by complex linearity to a map
\[
J: \cth \to \cth .
\]
Then $J$ is completely determined by the eigenspace corresponding to the eigenvalue $i$ (or to the eigenvalue -$i$).

An equivalent definition of a CR structure on a three-dimensional manifold may be given in terms of a complex line bundle:  A CR structures on $M$ is a line bundle $B\subset \ctm$ with the property that $B\cap \bar{B}$ contains only the zero section.  Then 
\[
H=\{ \Re Z\ :\ Z\in B\}
\]
is of rank 2 and $J$ is defined on ${\bC } \otimes H=B\oplus \overline{B}$ by setting
\[
J(Z)=iZ \mbox{  if }Z\in B
\]
and
\[
J(Z)=-iZ\mbox{  if  } Z\in \bar{B}.
\]
So for $X-iY\in B$
\[
JX=Y \mbox{  and  } JY=-X.
\]

{\bf{Example}  }
Let $ M^3\subset {\bC } ^2$ be a real hypersurface and let $J$ denote the usual operator on $\bR ^4$ giving the complex structure.  Set $H_p=T_pM\cap JT_pM$ for each $p\in M$.  Now $J$ acts on $H$ and (M,H,J) is a CR structure.  Or, to use the alternative definition, just   take
\[
B= T^{1,0}({\bC } ^2)\cap \ctm 
\]
where $T^{1,0}$ is the linear span of
\[
\bigg \{ \frac \partial {\partial z_1},\ \ \frac \partial {\partial z_2}\bigg\}
\]
(and   $T^{0,1}$ is the span of the conjugates).

So later we write $B=\tonezero (M)=\tonezero$ and write $T^{0,1}$ for $\bar{B}$.

The canonical bundle is another complex line bundle associated to a CR structure.  It is a subbundle of the second exterior product.  For a real hypersurface in ${\bC } ^2$ it is generated by the two-form $dz_1\wedge dz_2$ restricted to $M$.  More generally, if the CR structure is given by a complex line bundle $B$ then
\[
\Omega = \{ \omega \in {\bC }\otimes \Lambda ^2(TM)\ :\ i_b \omega =0 \mbox{ for all }b\in \bar{B}\} .
\]
The interior product $i_b\omega$ is given by $i_b\omega (X)=\omega (b\wedge X)$.
\begin{definition}
(M,H,J) is realizable in a \nbd of $p$ if there exist complex functions $f_1$ and $f_2$ such that 
\[
(X+iJX)f_k=0
\]
for all $X\in H$ and
\begin{eqnarray*}
F:M\to{\bC } ^2\ \ \ \ \ \\
x\to (\Re{f_1},\Im{f_1},\Re{f_2},\Im{f_2})
\end{eqnarray*}
is an embedding.
\end{definition}
It follows  upon identifying $M$ with its image $F(M)$ that the original structure $(M,H,J)$ coincides with the CR structure induced as in the Example.

We digress briefly to discuss higher-dimensional CR structures and return to this in Section \ref{higherdim}.

\begin{definition}
$(M^{2n+1},B)$ is a CR manifold if $B\subset \ctm$ is a vector subspace of rank $n$ with $B\cap \bar{B}=\{ 0\}$ and $[\Gamma  B ,\Gamma B ]\subset \Gamma B$.  I.e.,  the commutator of local sections of $B$ is always in $B$.

\end{definition}
More precisely, we have defined a CR manifold of hypersurface type.
\begin{definition}
$(M^{2n+1},B) $ is realizable if there is an embedding $F: M\to \bC ^{n+1}$ with, after identifying $M$ with $F(M)$,
\[
\tonezero (\bC ^{n+1} )\cap \bC \otimes TM = B.
\]

\end{definition}

The canonical bundle is now a complex line bundle in the exterior product $\Lambda ^{n+1}({\bC } \otimes TM^{2n+1}).$  Namely,

\begin{definition}
The canonical bundle is
\[
\Omega =\{ \omega \in {\bC}\otimes\Lambda ^{n+1}(TM)\ :\ i_v\omega =0,\  \forall v\in T^{0,1}\} .
\]
\end{definition}

\begin{definition} A function $f:M\to \bC $ is a CR function  if $Lf=0$ for all $L \in T^{0,1}$.

\end{definition}

\begin{lemma}
$M^{2n+1}$ is realizable in ${\bC } ^{n+1}$ if there exist CR functions $f_1,\ldots ,f_{n+1}
$ such that
\be \label{df}
df_1\wedge\ldots \wedge df_{n+1}\neq 0.
\ee
\end{lemma}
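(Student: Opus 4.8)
The plan is to take $F=(f_1,\dots ,f_{n+1})\colon M\to\bC^{n+1}$ as the realizing map and to show two things: that $F$ is an immersion, hence (after shrinking the neighborhood) a local embedding, and that the CR structure it induces on $F(M)$ coincides with $B$. Everything will rest on a single linear-algebra observation about the differentials $df_j$. Since each $f_j$ is a CR function, $df_j$ annihilates $\t01$; as $\t01=\bar B$ has complex dimension $n$ inside $\ctm$ of complex dimension $2n+1$, the annihilator of $\t01$ in $\bC\otimes T^*M$ has complex dimension $n+1$. The hypothesis $df_1\wedge\dots\wedge df_{n+1}\neq 0$ says precisely that the $n+1$ covectors $df_j$ are linearly independent, so they span this annihilator. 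Consequently the common kernel $\{Z\in\ctm : df_j(Z)=0 \text{ for all } j\}$ has complex dimension $n$ and therefore equals $\t01$ exactly, not merely contains it.

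Next I would establish that $F$ is an immersion. A real vector $v\in T_pM$ lies in $\ker dF_p$ exactly when $d(\Re f_j)(v)=d(\Im f_j)(v)=0$ for every $j$, that is, when $df_j(v)=0$ for all $j$. By the observation above this forces $v\in\t01=\bar B$; but $v$ is real, so $v\in B\cap\bar B=\{0\}$, using $\overline{\bar B}=B$. Hence $dF_p$ is injective, so $F$ is an immersion, and on a sufficiently small neighborhood of $p$ it is an embedding onto its image.

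It then remains to match the induced structure, i.e. to show $\tonezero(\bC^{n+1})\cap\bC\otimes T(F(M))=F_*B$. Writing $z_1,\dots ,z_{n+1}$ for the coordinates on $\bC^{n+1}$, a pushed-forward vector $F_*Z$ lies in $\tonezero(\bC^{n+1})$ if and only if $d\bar z_j(F_*Z)=0$ for all $j$, which is $Z(\bar f_j)=0$; since $Z\bar f_j=\overline{\bar Z f_j}$, this is equivalent to $df_j(\bar Z)=0$ for all $j$. By the key observation this holds precisely when $\bar Z\in\t01=\bar B$, i.e. when $Z\in B$. Thus $F_*$ carries $B$ isomorphically onto $\tonezero(\bC^{n+1})\cap\bC\otimes T(F(M))$, and under the identification of $M$ with $F(M)$ the realized structure is exactly $B$.

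The computations are routine once the dimension count is in place, so I do not expect a serious obstacle. The crux — and the only point requiring real care — is that the CR-function hypothesis does double duty: it is what pins the common kernel of the $df_j$ down to $\t01$ rather than to a larger space, and this single fact yields both the immersion property (through the reality of $v$) and the structure-matching (through the conjugation identity $Z\bar f_j=\overline{\bar Z f_j}$). Keeping the conjugations straight in that final pushforward step is where I would be most careful.
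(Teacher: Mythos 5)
Your proof is correct, and it rests on the same linear-algebraic kernel as the paper's --- that the $df_j$ annihilate $T^{0,1}$ and are pointwise independent --- but it is organized along a genuinely different route. The paper evaluates $df_1\wedge\cdots\wedge df_{n+1}$ on $(\bar L_1,\ldots,\bar L_n,T)$ with $T$ transverse to $H$, deduces that some $df_j(T)\neq 0$, and concludes that $df_1\wedge\cdots\wedge df_{n+1}\wedge d\bar f_1\wedge\cdots\wedge d\bar f_n$ is a nonvanishing top-degree form; this exhibits $n$ of the functions as strongly independent (the notion introduced later in the paper) and lets it present $F(M)$ locally as a graph $\Im z_{n+1}=f(z_1,\ldots ,z_n,\Re z_{n+1})$. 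You instead run a dimension count on the annihilator of $T^{0,1}$ in $\bC\otimes T^*M$, conclude that the common kernel of the $df_j$ is \emph{exactly} $T^{0,1}$, and obtain the immersion property from $B\cap\bar{B}=\{0\}$ applied to a real kernel vector. The two routes buy different things: the paper's yields the graph normal form and connects to the strong-independence terminology it needs afterwards; yours is slightly more economical and, more importantly, explicitly verifies the half of the definition of realizability that the paper leaves tacit, namely that $T^{1,0}(\bC^{n+1})\cap\bC\otimes T(F(M))=F_*B$, via the identity $Z\bar{f_j}=\overline{\bar{Z}f_j}$. That verification genuinely requires the ``exact kernel'' statement --- the containment $F_*B\subset T^{1,0}(\bC^{n+1})$ follows from the CR hypothesis alone, but equality needs the dimension count --- so your emphasis there is well placed.
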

\begin{proof}
Let $L_1,\ldots L_n$ be a basis for $T^{0,1} $ and let $T$ be any nonzero vector transverse to $H$.  From \eqref{df} and using that the functions are CR, we have
\[
df_1\wedge\ldots \wedge df_{n+1}(\bar{L}_1,\ldots ,\bar{L}_n,T)\neq 0.
\]
So $df_jT\neq 0 $ for some $j$, say $j=n+1$, which now implies 

\[
df_1\wedge\ldots \wedge f_n\wedge df_{n+1} \wedge \bar{df_1}\wedge\ldots\wedge\bar{df_n}\neq 0 .
\] 
Thus
\[
F=(f_1,\ldots , f_{n+1})
\]
is a local embedding.  Indeed perhaps after multiplying $F$ by $i$, $F(M)$ has the form
\[
\Im z_{n+1}=f(z_1, \ldots ,z_n, \Re z_{n+1})
\]

\end{proof}

The realizability problem is quite subtle.  For instance, most three-dimensional $C^\infty$ CR structures are not locally realizable \cite{JT}, \cite{N}.  

Most realizability results in higher dimensions concern \spc CR structures.  
\begin{definition}
A CR structure $(M,B)$ is strictly pseudo-convex if the quadratic form
\[
L\in B\to [L,\bar{L}] \ \ \ mod \{ B\oplus \bar{B} \}
\]
is definite.

\end{definition}

Such structures are realizable if $dim\  M\geq 7$.  See \cite{Ak} and \cite {Ku}  for the original proofs and \cite {We} for a variation. 

Although, as we said, the general realizability problem is  subtle there are  two easy results.

\begin{proposition}  
Real analytic CR manifolds are  locally realizable.
\end{proposition}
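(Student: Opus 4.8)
The plan is to use real analyticity in the one decisive way available: pass to the complexification, integrate the induced holomorphic distribution by the holomorphic Frobenius theorem, and then invoke the Lemma. Work locally near a point $p$ and choose a real analytic local frame $L_1,\dots ,L_n$ for $\t01=\bar{B}$; each $L_j$ is a complex-valued vector field with real analytic coefficients. Since $M$ is real analytic, embed a neighborhood of $p$ as a maximally totally real submanifold of a complex manifold $\tilde M$ of complex dimension $2n+1$ (the complexification), so that $M$ is to $\tilde M$ as $\bR ^{2n+1}$ is to $\bC ^{2n+1}$. Each real analytic coefficient extends to a holomorphic function on a neighborhood of $M$ in $\tilde M$, so the $L_j$ extend to holomorphic vector fields $\tilde L_1,\dots ,\tilde L_n$ spanning a holomorphic rank $n$ distribution $\mathcal V\subset T^{1,0}\tilde M$.

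Next I propagate integrability into the complex domain. The CR condition gives $[L_i,L_j]=\sum _k c_{ij}^k L_k$ on $M$ with real analytic $c_{ij}^k$; by the identity theorem for real analytic functions the holomorphic extensions satisfy $[\tilde L_i,\tilde L_j]=\sum _k \tilde c_{ij}^k \tilde L_k$ on a neighborhood in $\tilde M$, so $\mathcal V$ is an involutive holomorphic distribution. The holomorphic Frobenius theorem then furnishes holomorphic coordinates in which $\mathcal V$ is spanned by $n$ of the coordinate vector fields; the remaining $n+1$ coordinate functions are holomorphic functions $w_1,\dots ,w_{n+1}$ with $dw_1\wedge\dots\wedge dw_{n+1}\neq 0$ and $\tilde L_j w_k=0$ for all $j,k$.

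Finally set $f_k=w_k|_M$. Restricting $\tilde L_j w_k=0$ to $M$ gives $L_j f_k=0$, so each $f_k$ is a CR function. To apply the Lemma I must check $df_1\wedge\dots\wedge df_{n+1}\neq 0$ on $M$, and this is the linear algebra of maximally totally real submanifolds: at $p$ the projection $\ctm\to T^{1,0}_p\tilde M$ along $T^{0,1}_p\tilde M$ is a complex linear isomorphism (injectivity uses $T_pM\cap JT_pM=\{0\}$, and the dimensions agree), so the linearly independent holomorphic differentials $dw_k$ restrict to linearly independent elements of $\bC\otimes T^*_pM$. With $n+1$ CR functions whose differentials are independent, the Lemma produces a local embedding, establishing local realizability.

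I expect the crux to be the second step, the passage from the real analytic structure relations on $M$ to genuine involutivity of the holomorphic distribution $\mathcal V$ on $\tilde M$. This is exactly where real analyticity is indispensable: analytic continuation upgrades the Frobenius condition to a holomorphic statement to which the holomorphic Frobenius theorem applies. For merely $C^\infty$ structures no such continuation exists, consistent with the failure of local realizability noted just above.
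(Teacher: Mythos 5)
Your argument is correct: the paper itself offers no proof of this proposition, deferring to \cite[page 22]{Ja}, and the proof there is essentially the one you give — complexify the real analytic structure, extend the frame of $T^{0,1}$ to holomorphic vector fields, propagate involutivity by the identity theorem, apply the holomorphic Frobenius theorem, and restrict the $n+1$ holomorphic first integrals to $M$. You also correctly verify the only delicate point, namely that the restricted differentials remain independent because $\bC\otimes T_pM$ meets $T^{0,1}_p\tilde M$ trivially, which is exactly what is needed to invoke the paper's Lemma on independent CR functions.
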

A proof can be found, for instance, in \cite[page 22]{Ja}.
\begin{proposition}\label{transverse}
A CR manifold admiting a vector field $v$ transverse to $H$ and preserving the CR structure
is locally realizable.
\end{proposition}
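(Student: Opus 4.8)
The plan is to use the transverse symmetry to put the structure into a normal form to which the realizability Lemma (local embedding from $n+1$ CR functions with $df_1\wedge\cdots\wedge df_{n+1}\neq 0$) applies. Interpreting \emph{preserving the CR structure} infinitesimally, the hypothesis means that the flow of $v$ carries $\bar B=T^{0,1}$ to itself, i.e. $[v,\Gamma T^{0,1}]\subset\Gamma T^{0,1}$ (equivalently $[v,\Gamma B]\subset\Gamma B$). Since $v$ is transverse to $H$ we have $v\notin\cth=B\oplus\bar B$, and I would introduce the complex distribution $D=\bC v\oplus\bar B$ of rank $n+1$. Using the CR integrability $[\Gamma\bar B,\Gamma\bar B]\subset\Gamma\bar B$ together with $[v,\Gamma\bar B]\subset\Gamma\bar B$, the distribution $D$ is involutive. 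A short linear-algebra check, using $B\cap\bar B=\{0\}$ and $v\notin B\oplus\bar B$, shows that $D\cap\bar D=\bC v$ is spanned by the single real field $v$, while $D+\bar D=\ctm$.

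First I would apply the complex Frobenius theorem of Nirenberg to $D$. Because $D$ is involutive with $D\cap\bar D$ of constant rank one and spanned by a real vector field, the theorem produces local coordinates $(t,z_1,\dots ,z_n)$, with $t$ real and the $z_j$ complex, such that $v=\partial_t$ and $D=\mathrm{span}\{\partial_t,\partial_{\bar z_1},\dots ,\partial_{\bar z_n}\}$. (Equivalently, integrate $v$ to obtain the flow coordinate $t$, note that $\bar B$ descends to an integrable almost complex structure on the $2n$-dimensional local quotient, and invoke Newlander--Nirenberg there to obtain the $z_j$.) Since $\bar B\subset D$ and $\bar B\cap\bC v=\{0\}$, the projection of $\bar B$ onto $\mathrm{span}\{\partial_{\bar z_j}\}$ along $\partial_t$ is an isomorphism, so $\bar B$ admits a frame of the form $\bar L_j=\partial_{\bar z_j}+s_j\,\partial_t$, and $v$-invariance of $\bar B$ forces $\partial_t s_j=0$, i.e. $s_j=s_j(z,\bar z)$.

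At this stage $z_1,\dots ,z_n$ are already CR functions, since each is annihilated by every element of $D\supset\bar B$. To produce the missing $(n+1)$-st CR function with an independent differential, I would seek $f=t-h(z,\bar z)$; then $\bar L_j f=s_j-\partial_{\bar z_j}h$, so $f$ is CR precisely when $\partial_{\bar z_j}h=s_j$ for every $j$, i.e. when $\bar\partial h=\sum_j s_j\,d\bar z_j$. Computing $[\bar L_i,\bar L_j]=(\partial_{\bar z_i}s_j-\partial_{\bar z_j}s_i)\,\partial_t$ and comparing with the requirement $[\bar L_i,\bar L_j]\in\bar B$ (a pure $\partial_t$-field lies in $\bar B$ only if it vanishes) forces $\partial_{\bar z_i}s_j=\partial_{\bar z_j}s_i$; thus $\sum_j s_j\,d\bar z_j$ is $\bar\partial$-closed, and the local $\bar\partial$-Poincar\'e lemma supplies $h$. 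The resulting $f=t-h$ has $\partial_t f=1$, so $df=dt-\sum_k(\partial_{z_k}h)\,dz_k-\sum_k(\partial_{\bar z_k}h)\,d\bar z_k$ carries a $dt$-term absent from $dz_1,\dots ,dz_n$; hence $dz_1,\dots ,dz_n,df$ are independent and condition \eqref{df} holds.

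Finally, applying the Lemma to the $n+1$ CR functions $z_1,\dots ,z_n,f$ yields the local embedding, proving realizability. The genuinely nontrivial input is the integrability step: producing the transverse complex coordinates $z_j$ rests on the Newlander--Nirenberg (complex Frobenius) theorem, and I expect this to be the main obstacle to a fully self-contained argument. Once those coordinates are in hand the remainder—reading off the frame $\bar L_j=\partial_{\bar z_j}+s_j\partial_t$, deducing $\bar\partial$-closedness from integrability, and solving $\bar\partial h=\sum_j s_j\,d\bar z_j$—is routine.
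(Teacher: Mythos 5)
Your argument is correct, but it is not the route the paper takes. The paper defers the proof to Section \ref{higherdim}, where the ``standard proof'' it modifies goes \emph{up} a dimension: one extends $\tonezero$ and $v$ to $M\times\bR$, forms $W=V\oplus\bC\,(v+i\tfrac{\partial}{\partial t})$, checks $W\cap\bar{W}=\{0\}$, $W+\bar{W}=\bC\otimes T(M\times\bR)$ and involutivity, and applies Newlander--Nirenberg on $M\times\bR$ so that $M$ sits as a real hypersurface in a complex manifold. You instead go \emph{down} a dimension: quotient $M$ by the flow of $v$ (legitimate, since $\lie_v\tonezero=\tonezero$ makes $\pi_*\bar{B}$ a well-defined involutive structure with $\pi_*B\cap\pi_*\bar{B}=\{0\}$), apply Newlander--Nirenberg on the $2n$-dimensional quotient to get $z_1,\dots,z_n$, and then recover the transverse CR function $f=t-h$ by solving $\bar\partial h=\sum_j s_j\,d\bar z_j$, whose compatibility condition you correctly extract from CR integrability (vacuous when $n=1$), before invoking the paper's Lemma on $n+1$ functions satisfying \eqref{df}. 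Both proofs rest on Newlander--Nirenberg; yours is more constructive, exhibiting the embedding functions explicitly, at the cost of the extra $\bar\partial$-Poincar\'e step and a small normalization you gloss over (the complex Frobenius theorem gives $v$ as a multiple of $\partial_t$, so one should straighten $v$ first or rescale). The real advantage of the paper's version is that it survives the generalization needed for Theorem \ref{Y}: when the transverse field $Y$ is merely complex one cannot form a quotient by its flow, whereas the $M\times\bR$ construction with $Y+i\tfrac{\partial}{\partial t}$ goes through unchanged.
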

To preserve the CR structure means that the Lie derivative in the direction of $v$ satisfies

\[
\lie _vT^{1,0}=T^{1,0}
\]

A generalization of this result is important in Section 3 and will be proved there.

\section
\medskip
We first wish  to explain the observation of \cite{RT} that a shear-free congruence of null geodesics on a four-dimensional manifold induces a three-dimensional   CR structure on a quotient manifold.

Let $M^4$ be a Lorentz manifold with metric $g$ and let $k$ be a null vector field, $g(k,k)=0$.  Let $K$ be the real line bundle generated by $k$.  Set
\[
K^\perp _p=\{ v\in T_pM:g(v,k)=0\} .
\]
Note that $K\subset K^\perp$ and that $K^\perp / K$ is an $R^2$ bundle on $M$.  Following the  notation  in \cite {T}, let  $n\in K^\perp $.  Denote the equivalency class of $n$ in $K^\perp /K$ by $[n]$ and use the same notation for $n\in {\bC } \otimes (K^\perp /K)=\bC\otimes K^\perp /\bC\otimes K$. 

\begin{lemma}
The metric $g$ induces a well-defined positive definite inner product on $K^\perp / K$.  
\end{lemma}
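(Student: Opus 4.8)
The plan is to define the inner product by the only formula available and then verify the two nontrivial points: independence of the chosen representatives, and positive definiteness. Since the construction is fiberwise linear algebra with $g$ varying smoothly, it suffices to work at a single point $p$; smoothness of the resulting bundle metric is then automatic. So fix $p$ and, for classes $[n],[m]\in K^\perp_p/K_p$, set
\[
\langle [n],[m]\rangle := g(n,m),
\]
where $n,m\in K^\perp_p$ are arbitrary representatives.

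First I would establish that this is well defined. If $n'=n+ak$ and $m'=m+bk$ are other representatives, with $a,b\in\bR$, then
\[
g(n',m')=g(n,m)+a\,g(k,m)+b\,g(n,k)+ab\,g(k,k).
\]
Because $n,m\in K^\perp_p$ we have $g(k,m)=g(n,k)=0$, and $g(k,k)=0$ since $k$ is null; hence $g(n',m')=g(n,m)$, so the form descends to $K^\perp_p/K_p$. Symmetry is inherited from that of $g$, and bilinearity is clear.

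The substance of the lemma, and the step I expect to be the main obstacle, is positive definiteness, for this is exactly where the Lorentzian signature enters. The cleanest route is to introduce an adapted null frame at $p$: choose a second null vector $\ell$ with $g(k,\ell)=1$, together with unit spacelike vectors $e_1,e_2$ spanning the orthogonal complement of $\mathrm{span}\{k,\ell\}$, normalized so that $g(e_i,e_j)=\delta_{ij}$ and $g(e_i,k)=g(e_i,\ell)=0$ (such a frame exists by the signature $(-,+,+,+)$). Writing a general $v\in K^\perp_p$ as $v=ak+b\ell+c_1e_1+c_2e_2$, the condition $g(v,k)=0$ forces $b=0$, so $K^\perp_p=\mathrm{span}\{k,e_1,e_2\}$. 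Then $g(v,v)=c_1^2+c_2^2\ge 0$, with equality precisely when $c_1=c_2=0$, i.e. when $v\in K_p$. Thus $g|_{K^\perp_p}$ is positive semidefinite with radical exactly $K_p$, and the induced form on $K^\perp_p/K_p$ is positive definite.

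If one prefers a frame-free phrasing, I would argue instead by a signature count: nondegeneracy of $g$ gives $(K^\perp_p)^\perp=K_p$, so the radical of $g|_{K^\perp_p}$ is $K^\perp_p\cap K_p=K_p$, whence the induced form on the two-dimensional quotient is nondegenerate; the hyperbolic plane $\mathrm{span}\{k,\ell\}$ absorbs the one negative and one positive direction of $g$, leaving the complementary two-plane—isomorphic to $K^\perp_p/K_p$ since $\ell\notin K^\perp_p$—with the two remaining positive directions, hence positive definite. Either way, once positive definiteness is secured the rest is just the short unwinding of the orthogonality relations above.
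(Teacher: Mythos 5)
Your proof is correct. The well-definedness step is identical to the paper's: expand $g(n+ak,m+bk)$ and use $g(k,k)=0$ together with $n,m\in K^\perp$. Where you genuinely diverge is in the definiteness step. The paper argues abstractly: if $g(n,n)=0$ with $n\in K^\perp$, then $g$ vanishes identically on the plane spanned by $k$ and $n$, and a Lorentz metric admits no two-dimensional totally isotropic subspace, so $n$ must be a multiple of $k$; positive (rather than negative) definiteness is then asserted from the signature. You instead build an adapted null frame $\{k,\ell,e_1,e_2\}$ with $g(k,\ell)=1$, identify $K^\perp_p=\mathrm{span}\{k,e_1,e_2\}$, and read off $g(v,v)=c_1^2+c_2^2$ directly. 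Your computation is more explicit and delivers positive semidefiniteness with radical exactly $K_p$ in one stroke, at the modest cost of invoking the existence of the null frame (and implicitly fixing the signature convention $(-,+,+,+)$; with the opposite convention the induced form is negative definite, a point the paper also glosses over). The paper's argument is shorter and frame-free but leans on the standard fact about maximal isotropic subspaces of Lorentzian inner products, which it does not prove either. Your second, signature-counting variant is essentially a frame-free repackaging of the same idea and is also sound.
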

\begin{proof}
Let $[n_1]$ and $[n_2]$ belong to the fiber of $K^\perp / K$ over some point of $M$.   Define $g([n_1],[n_2])$ to be $g(n_1,n_2)$.  If $v_1$ and $v_2$ are different choices then $v_j=n_j+a_jk$ and so
\beastar
g(v_1,v_2)&=& g(n_1+a_1k,n_2 + a_2k)\\
&=& g(n_1,n_2)
\eeastar
since $k$ is a null vector and $n_j\in K^\perp$.  This shows that $g$ is well-defined.

To see that $g$ is definite, assume that for some $[n]$ we have 
\[
g([n],[n])\equiv g(n,n)=0.
\]
By the definitions of $k$ and $K^\perp$ we also have
\[
g(k,k)=0
\]
and 
\[
g(k,n)=0
.
\]
So either $n$ is a multiple of $k$ or $g$ vanishes on  a two-dimensional plane.  The second alternative is not possible for a Lorentz metric.  So $n=ak$ and thus $[n]=0$.  Hence $g$ is definite, and since it arises from a Lorentz metric it is positive definite.

\end{proof}

Fix an orientation for $K^\perp /K$ (this is not a problem, as long as we care only about local results)   and then let  $J: K^\perp/K\to K^\perp /K$ be the operation of rotation by $\pi /2$ radians with respect to the induced metric and orientation.   Finally, set 
\[
N=\{ n\in \bC\otimes K^\perp : J[n]= - i[n]\} .
\]
Note that $N$ is a two-dimensional complex vector bundle on $M$. Extend the inner product $g$ to $N$ as a complex linear form.  For $n_1=\xi +iJ\xi$ and $n_2=\eta +iJ\eta$ in $N$ we have
\beastar
g(n_1,n_2)&=& g(\xi ,\xi) ) +ig(J\xi , \eta) +ig(\xi ,J\eta ) -g(J\xi , J\eta)\\
&=& 0
\eeastar
since $J$ is rotation by $\pi /2$ radians.  So $N$ is said to be totally null.  On the other hand,
\[
g(n_1,\bar{n_1} ) = 2g(\xi ,\xi )\neq 0.
\]

We have
\[
N\subset {\bC }\otimes K^\perp \subset {\bC }\otimes TM
\]
and 
\[
N\cap \bar{N} =
{\bC }\otimes K, \ \ \ \\ \ \ N \plus \bar{N}=\bC \otimes K^\perp .
\]

Now consider the flow generated by the vector field $k$.  For small values of the time parameter, the orbit space is a three-dimensional manifold (again, for local results this is clear); call it $M^\prime$.  Without additional assumptions on $k$ the bundle $N$ does not project to a well-defined subbundle of  ${\bC } \otimes TM^\prime$.  Here is where physics enters.

We temporarily drop the assumption that $k$ is null.  
\begin{definition}  \cite[page 1426]{RT}   The vector field $k$ is said to be conformally geodesic if the associated flow preserves $K^\perp$ and $g(k,k)$ does not change sign.
\end{definition}
Note that this definition  depends only on the conformal class of $g$ and also that  in Riemannian geometry the condition on the flow  and $g(k,k)=c$ imply  $\nabla _kk=0$. 

  The flow condition may be rewritten as 
 \[
\lie _k K^\perp \subset K^\perp .
 \] 
and is equivalent to 
\be
g(k)\wedge\lie _kg(k)=0 \label{liegk}
\ee
where $g(k)$ is the one-form defined by $g(k) v= g(k,v)$.  
To see this equivalence, we first note that if $v$ is a vector field satisfying $g(k)v=0 $ then also $k(g(k)v)=0$ and so

\be\label{v}
(\lie _kg(k))v+g(k)\lie _k v=0.
\ee
We want to derive $g(k)\wedge \lie _kg(k)=0.$  
It is enough to show that 
\[
g(k)v=0\implies \lie _kg(k)v=0.
\]That is, if $g(k)$ and $\lie _kg(k)$ have the same kernel then these one-forms are linearly dependent.  So assume $\lie _k K^\perp\subset K^\perp$ and $g(k)v=0$.  
We now have
\[
g(k)v=0\Rightarrow v\in K^\perp \Rightarrow \lie _kv\in K^\perp \Rightarrow
g(k)\lie _k v=0
\Rightarrow \lie _kg(k)v=0
\]
where the last implication follows from \eqref{v}.

On the other hand, if $g(k)\wedge \lie _kg(k)=0$, then
\[
g(k)v=0 \Rightarrow (\lie _k g(k)v=0 \Rightarrow g(k)\lie _k v=0\Rightarrow \lie _kK^\perp\subset K^\perp .
\]

 We are interested in the case where $k$ is a null vector, $g(k,k)=0$.  When $k$ is null the foliation of $M$ by the integral curves of $k$ is  called a congruence of null geodesics.

The Lorentz metric $g$ induces a degenerate inner product on $K^\perp$ and therefore also a (degenerate) conformal structure.
\begin{definition}
A conformally geodesic vector field is shear-free if the associated flow preserves the conformal structure of $K^\perp$.
\end{definition}

The physical hypothesis that $k$ generates  a shear-free congruence of null geodesics also can be formulated in terms of the Lie derivative.

\begin{theorem}\cite {RToptical}
A vector field $k$ on a manifold $M^4$ with Lorentz metric $g$ generates a shear-free congruence of null geodesics if and only if
\begin{eqnarray}
g(k,k)=0  \label{0}\\
\lie _k g=\lambda g +\phi \otimes g(k)\label{lieg}
\end{eqnarray}
where $\lambda$ is a function, $\phi$ is a one-form,  $g(k)$ is as defined above, and $\phi \otimes g(k)$ signifies the symmetric product constructed from the one-forms.
\end{theorem}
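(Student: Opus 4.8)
The plan is to unwind the definition of a shear-free congruence of null geodesics into three conditions and to translate each into a statement about the symmetric two-tensor $T:=\lie_k g$. The three conditions are that $k$ be null, which is exactly \eqref{0}; that $k$ be conformally geodesic, i.e.\ that its flow preserve $K^\perp$; and that $k$ be shear-free, i.e.\ that the induced flow on $K^\perp/K$ preserve the conformal class of the positive-definite metric $h$ induced on $K^\perp/K$ above. I will show that the last two conditions together are equivalent to the single requirement that the restriction of $T$ to $K^\perp\times K^\perp$ be pointwise proportional to $g$, and that, given \eqref{0}, this is in turn equivalent to \eqref{lieg}.

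The two translation steps use the elementary identity $(\lie_k g)(v,w)=k\big(g(v,w)\big)-g([k,v],w)-g(v,[k,w])$. Taking $w=k$ with $v$ a local section of $K^\perp$, and using $g(v,k)\equiv 0$ and $[k,k]=0$, gives $g([k,v],k)=-T(v,k)$; hence the flow preserves $K^\perp$ (equivalently $\lie_kK^\perp\subset K^\perp$, equivalently $g(k)\wedge\lie_k g(k)=0$) precisely when $T(v,k)=0$ for all $v\in K^\perp$. Once $K^\perp$ is preserved the flow descends to $K^\perp/K$, and the same identity gives $(\lie_k h)([v],[w])=T(v,w)$ for $v,w\in\Gamma(K^\perp)$, a formula independent of the chosen lifts precisely because the previous step forces $T(k,\cdot)|_{K^\perp}=0$. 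Hence the shear-free condition $\lie_k h=\mu\,h$ reads $T(v,w)=\mu\,g(v,w)$ for $v,w\in K^\perp$, and, since this already contains $T(k,\cdot)|_{K^\perp}=0$, the conformally geodesic and shear-free conditions together are equivalent to the single requirement that $T(v,w)=\mu\,g(v,w)$ for all $v,w\in K^\perp$.

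It remains to promote this restricted identity to all of $TM$. The key pointwise fact is that, for a null vector $k$, a symmetric two-tensor $S$ vanishes on $K^\perp\times K^\perp$ if and only if $S=\phi\otimes g(k)$ (symmetric product) for some one-form $\phi$. To see this, work in a local null tetrad $k,\ell,e_2,e_3$ with $g(k,\ell)=1$, $g(e_a,e_b)=\delta_{ab}$ and all other pairings zero, so that $K^\perp=\langle k,e_2,e_3\rangle$ and the one-form $g(k)$ is the dual coframe element $\ell^\ast$. Writing $S_{\mu\nu}$ for the components of $S$ on this tetrad, the hypothesis kills every component except $S_{k\ell},S_{\ell\ell},S_{\ell 2},S_{\ell 3}$, and these are matched uniquely by $\phi=S_{k\ell}\,k^\ast+\tfrac12 S_{\ell\ell}\,\ell^\ast+S_{\ell 2}\,e_2^\ast+S_{\ell 3}\,e_3^\ast$, since $\phi\otimes g(k)$ also vanishes on $K^\perp\times K^\perp$. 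Applying this with $S:=T-\mu g$, which vanishes on $K^\perp\times K^\perp$ by the previous paragraph, gives $T=\mu g+\phi\otimes g(k)$, i.e.\ \eqref{lieg} with $\lambda=\mu$. Every step is reversible: assuming \eqref{0} and \eqref{lieg}, one reads off $T(v,w)=\lambda\,g(v,w)$ for $v,w\in K^\perp$ from $g(k)|_{K^\perp}=0$, recovering both the conformally geodesic and the shear-free conditions.

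The content of the argument is almost entirely in the second and third steps; the main subtlety is organizational. The Lie derivative $\lie_k h$ only makes sense once the flow is known to preserve $K^\perp$, so the conformally geodesic condition must logically precede the shear-free one, and one must check that $(\lie_k h)([v],[w])=T(v,w)$ does not depend on the chosen lifts $v,w$ — which is exactly the conformally geodesic condition again. It is worth noting that no antisymmetric (twist) term enters, because $\lie_k g$ is symmetric; correspondingly the tetrad components $T_{k\ell},T_{\ell\ell},T_{\ell 2},T_{\ell 3}$ are left completely unconstrained by \eqref{lieg}, reflecting the physical fact that a shear-free null congruence may still expand and twist.
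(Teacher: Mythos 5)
Your argument is correct, and it takes a genuinely different route from the paper's. The paper proves the two implications separately: for the ``if'' direction it computes $g(k)\wedge\lie_kg(k)=0$ directly from \eqref{lieg} and then integrates the ODE $\partial_tf=\lambda f$ along orbits to see that the conformal class on $K^\perp$ is preserved; for the ``only if'' direction it builds a frame $e_1,\dots,e_4$ invariant along the orbits, adapted to $K^\perp$, and defines $\lambda$ and $\phi$ by explicit componentwise formulas on that frame. You instead isolate a single pointwise linear-algebra lemma --- for null $k$, a symmetric two-tensor vanishes on $K^\perp\times K^\perp$ if and only if it is the symmetric product of some one-form with $g(k)$ --- verified on a null tetrad, and then the whole theorem collapses to restricting $\lie_kg-\mu g$ to $K^\perp\times K^\perp$. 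This buys a visibly reversible argument that handles both directions at once and makes transparent exactly where the nullity of $k$ enters (namely, $k\in K^\perp$, so that both the conformally geodesic and shear-free conditions live on $K^\perp\times K^\perp$); your explicit tetrad count of the unconstrained components $T_{k\ell},T_{\ell\ell},T_{\ell a}$ also explains why $\phi$ has exactly four free functions. What the paper's version supplies that you elide is the passage between the infinitesimal condition $\lie_kh=\mu h$ and the finite statement that the flow preserves the conformal class (the $\Lambda(t,x)$ integration); you identify ``shear-free'' with the infinitesimal condition throughout, which is a standard equivalence but worth one sentence. Your identity $g([k,v],k)=-(\lie_kg)(v,k)$ for $v\in K^\perp$ is the same computation the paper performs when it shows $(\lie_kg)(k,v)=(\lie_kg(k))(v)$, just used more directly.
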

\begin{proof}
We first show that \eqref{lieg}, together with \eqref{0}, implies \eqref{liegk} and hence $k$ is a conformally geodesic vector field.  We start with the Leibniz rule:
\[
k(g(u,v))=\lie _kg(u,v)+g(\lie_k u,v)+g(u,\lie_kv).
\]
Setting $u=k$ and rearranging this becomes
\[
\lie _kg(k,v)=k(g(k,v))-g(k,\lie _kv).
\]
Further
\[
k(g(k,v))=k(g(k)v) =( \lie _k g(k))v +g(k,\lie_k v)
\]
and so
\[
(\lie _k g)(k,v)=(\lie _kg(k)) (v).
\]
Now
\beastar
(g(k)\wedge\lie _kg(k))(u,v)&=& (g(k)u)(\lie _k g(k) v)-(g(k)v)(\lie _kg(k)u)\\
&=& 
 (g(k)u) (\lie _k g)(k,v)-(g(k)v)(\lie _kg(k,u))\\
 &=&
 g(k,u) (\lambda g(k,v)+\phi\otimes g(k)(k,v))\\
& &\ \ \ \ \ \ \ \  -g(k,v)(\lambda g(k,u)+\phi\otimes g(k)(k,u))\\
 &=&
 0.
 \eeastar

We know that $g(k)\wedge \lie _kg(k)=0$ implies that $K^\perp $ is preserved and so $k$ is conformally geodesic.

To show that  the conformal class $g$ induces   on $K^\perp$ is constant along the flow on $M$ induced by $k$, we let $v\in K^\perp$ be constant along the flow.  So $g(k)v=0$ and $\lie _kv=0$.  Thus
\beastar
k(g(v,v)) &=& \lie g(v,v)\\
&=&(\lambda g+\phi\otimes g(k))(v,v)\\
&=&\lambda g(v,v).
\eeastar
This gives us an ordinary differential equation.  If local coordinates $(t,x)$ are introduced with $k=\partial _t$ then the equation has the form
\[
\frac {\partial f(t,x)}{\partial t}=\lambda (t,x)f(t,x)
\]
and the solutions are 
\[f(t,x)=\Lambda (t,x)f(0,x)
\]
for some function $\Lambda$.  Thus
\[
g(v,v)(t,x)=\Lambda (t,x)g(v,v)(0,x).
\]
This shows that the conformal class of the metric on $K^\perp$ does not change under the flow. 

Conversely,we want to show that if $k$ generates a shear-free congruence of null geodesics then there exist a scalar function $\lambda$ and a one-form $\phi$ satisfying \eqref{lieg}.  To see this, we start with a frame invariant along the orbits, labeled $e_1, e_2,e_3,e_4$ with $\{ e_1,e_2,e_3\}$ a basis for $K^\perp$ and $g(e_4,e_4)=0$.  Let $0\leq i\leq 3,\ 0\leq j\leq 3$. Note that $g(k)e_i =0$ and  $g(k)x_4\neq 0$.  For  $p\in M$  parametrize the orbit through $p$ by $t$.  Since the conformal class of $g$ on $K^\perp$ is constant
\[
g(e_i, e_j)|_t=\Lambda (t)g(e_i,e_j)|_p 
\]
Thus
\[  (\lie _kg)(e_i,e_j)|_p =(\lie _k \Lambda )g(e_i,e_j)|_p.
\]
Define
\beastar
\lambda & = & \lie _k \Lambda\\
\phi (e_i) &=& (g(k)e_4)^{-1}\Big((\lie _k g)(e_i,e_4))-\lie _k\lambda g(e_i,e_4)\Big),\ \ 0\leq i,j\leq 4.
\eeastar
We have for $0\leq i,j\leq 3$
\beastar
(\lambda g +\phi\otimes g(k))(e_i, e_j) &=&(\lie _k\Lambda )g(e_i, e_j) =( \lie _k g)(e_i,e_j),
\eeastar
while for $i\leq 4$ we have
\beastar
(\lambda g +\phi\otimes g(k))(e_i, e_4) &=& (\lie _k \Lambda)g(e_i,e_4) +\phi (e_i)g(k)e_4\\
&=&
(
\lie  _k \Lambda ) g(e_i,e_4)+\lie _k(g(e_i,e_4)- (\lie _k\Lambda)g(e_i,e_4)\\
&=&(\lie _k g)(e_i,e_4).
\eeastar
Thus
\[
\lambda g +\phi\otimes g(k) = \lie _k g .
\]

\end{proof}
Let $\pi$ denote the map of $M$ to the orbit space
\[
\pi : M\to M^\prime .
\]
\begin{lemma}
Under the conditions of the Theorem,  
$\pi _*(N)$ is  a complex line bundle $\bar{B}\subset {\bC } \otimes TM^\prime $
which satisfies  $B\cap \bar{B} =\{ 0\}.$  

\end{lemma}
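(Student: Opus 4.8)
The plan is to reduce everything to a single flow-invariance statement and then to pure dimension counting. Recall that $\pi\colon M\to M^\prime$ collapses each orbit of $k$ to a point, so the differential $\pi_*\colon \ctm\to\ctmprime$ has kernel exactly $\bC\otimes K$. Since the excerpt establishes $\bC\otimes K = N\cap\bar N\subset N$, the image $\pi_*(N)$ will be a well-defined subbundle of $\ctmprime$ precisely when $N$ is carried into itself by the flow of $k$, i.e. when $\lie_k N\subset N$; granting this, the kernel of $\pi_*$ restricted to $N$ is the one-dimensional $\bC\otimes K$, so $\pi_*(N)=:\bar B$ is automatically a complex line bundle. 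Thus the one substantive step is to prove $\lie_k N\subset N$, and I expect this to be the main obstacle --- it is exactly the point where the shear-free hypothesis, in the form of the Robinson--Trautman equation \eqref{lieg}, must be fed in.

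To prove $\lie_k N\subset N$ I would work locally with a convenient spanning set. Choosing an oriented orthonormal frame $[\xi],[J\xi]$ of $K^\perp/K$ and a lift $\xi\in K^\perp$, the section $\ell=\xi+iJ\xi$ satisfies $J[\ell]=-i[\ell]$ and $g(\ell,\ell)=0$, so $N=\mathrm{span}_{\bC}\{k,\ell\}$. Because $g(\cdot,\ell)$ is a nonzero functional on $\bC\otimes K^\perp$ vanishing on both $k$ and $\ell$, one has the clean description
\[
N=\{\,v\in\bC\otimes K^\perp : g(v,\ell)=0\,\}=\{\,v:g(v,k)=0\ \text{and}\ g(v,\ell)=0\,\}.
\]
Since $\lie_k(ak+b\ell)=(ka)k+(kb)\ell+b[k,\ell]$ and the first two terms already lie in $N$, it suffices to check $[k,\ell]\in N$, i.e. that $g([k,\ell],k)=0$ and $g([k,\ell],\ell)=0$. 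Applying the Leibniz rule $(\lie_k g)(u,v)=k(g(u,v))-g([k,u],v)-g(u,[k,v])$ to the pairs $(\ell,k)$ and $(\ell,\ell)$, and using $g(k,k)=g(k,\ell)=g(\ell,\ell)=0$, gives $g([k,\ell],k)=-(\lie_k g)(\ell,k)$ and $g([k,\ell],\ell)=-\tfrac{1}{2}(\lie_k g)(\ell,\ell)$. Now I substitute \eqref{lieg}: each term of $\lambda g+\phi\otimes g(k)$ evaluated on $(\ell,k)$ or $(\ell,\ell)$ is a sum of products containing one of the vanishing inner products $g(k,k),g(k,\ell),g(\ell,\ell)$, so both right-hand sides vanish. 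Hence $[k,\ell]\in N$ and $\lie_k N\subset N$.

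It remains to verify $B\cap\bar B=\{0\}$, where $B=\overline{\bar B}=\pi_*(\bar N)$ (using that $\pi_*$ is real and so commutes with conjugation), and this is a dimension count. The subbundles satisfy $N+\bar N=\bC\otimes K^\perp$ and $N\cap\bar N=\bC\otimes K=\ker\pi_*$ from the excerpt. Applying $\pi_*$ to the first relation gives $B+\bar B=\pi_*(\bC\otimes K^\perp)$; since $\bC\otimes K^\perp$ has complex rank $3$ and contains the one-dimensional kernel $\bC\otimes K$, the sum $B+\bar B$ has complex rank $2$. As $B$ and $\bar B$ each have rank $1$, the identity $\dim(B\cap\bar B)=\dim B+\dim\bar B-\dim(B+\bar B)=1+1-2=0$ forces $B\cap\bar B=\{0\}$. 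This exhibits $(M^\prime,B)$ as a three-dimensional CR structure, which is the content of the lemma.
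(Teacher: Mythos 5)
Your proof is correct, but it takes a genuinely different route from the paper's. The paper's argument pushes the \emph{structure} down to $M^\prime$ rather than the bundle $N$ itself: since $K^\perp$ is flow-invariant (conformally geodesic), $K^\perp/K$ descends to a two-plane distribution $H$ on $M^\prime$; since the conformal class on $K^\perp/K$ is flow-invariant (shear-free), the rotation $J$ is well defined on $H$ downstairs; then ${\bC}\otimes H$ splits into the $\pm i$ eigenspaces $B\oplus\bar{B}$ with $\pi_*N=\bar{B}$, and $B\cap\bar{B}=\{0\}$ is automatic because eigenspaces for distinct eigenvalues meet trivially. You instead prove flow-invariance of $N$ directly, reducing it to $[k,\ell]\in N$ and verifying $g([k,\ell],k)=g([k,\ell],\ell)=0$ by feeding \eqref{lieg} into the Leibniz rule, and then obtain $B\cap\bar{B}=\{0\}$ by counting dimensions via $B+\bar{B}=\pi_*({\bC}\otimes K^\perp)$. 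Your dimension count is done carefully enough to avoid the (false in general) identity $\pi_*(N)\cap\pi_*(\bar N)=\pi_*(N\cap\bar N)$, and your identification of $N$ as the joint kernel of $g(\cdot,k)$ and $g(\cdot,\ell)$ is justified by $g(\bar\ell,\ell)\neq 0$. What the paper's route buys is brevity and the geometric picture: the whole package (distribution, conformal class, orientation, hence $J$) descends, so the CR structure on $M^\prime$ is literally the eigenspace decomposition reconstructed downstairs. What your route buys is an explicit, self-contained demonstration of exactly where the Robinson--Trautman equation \eqref{lieg} enters, without appealing to the flow-invariance of the conformal class established in the proof of the Theorem; it also sidesteps the choice of orientation on $H$.
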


\begin{proof}
 Since $K^\perp$ is itself invariant under the flow, $K^\perp / K$ projects to a well-defined two-plane distribution $H$ on $M^\prime$ and on $H$ we have a well-defined conformal class of metrics.  Thus $\bC \otimes TH$ splits into the eigenspaces of $J$
\[
\bC \otimes TH = B\oplus \bar{B}.
\]
with $\pi _* N=\bar{B}$.
\end{proof}

That is, the physical assumptions lead to a CR structure on the orbit space.  Further, as we now show, the same conditions provide a  two-form $F$ associated to $N$ which itself also passes down to $M^\prime$.  The interest in such a two-form comes from considerations of Maxwell's equations.  In classical physics, the components of the magnetic and electrical fields can be used to construct a real two-form $F$, called the Faraday tensor.  Then, in the absence of charge, Maxwell's equations become $dF=0$. Naturally, in relativistic physics the situation is more complicated.

To define $F$ we first find a basis for $N$.  
Let $\xi \in K^\perp$ and $\xi \notin K$.Choose any $\eta \in K^\perp $ such that $J[\xi ]=[\eta ]$.  Then $n= \xi +i\eta$ and $k$ form a basis for $N$.

Let $g(k)$, defined above, and $g(n)$, defined in the same way, be one-forms on $M$.  Set
\[
F=g(n)\wedge g(k).
\]

Note that $F$ is  nowhere zero  since the one-forms $g(n)$ and $g(k)$ are independent.  For example, $g(n)\bar{n} \neq 0$ while $g(k)\bar{n}=0$.


The two-form $F$ is associated to $N$ in the following sense:
\begin{lemma}
$N=\{ v\in{ \ctm}\ :\ i_vF=0\} $.

\end{lemma}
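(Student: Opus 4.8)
The plan is to identify both sides as complex rank-$2$ subbundles of $\ctm$, prove one inclusion directly, and deduce equality by a dimension count. The starting point is the elementary identity for the interior product of a decomposable two-form. Since $F=g(n)\wedge g(k)$ is a wedge of two one-forms, for any $v\in\ctm$ we have
\[
i_vF=(g(n)v)\,g(k)-(g(k)v)\,g(n)=g(n,v)\,g(k)-g(k,v)\,g(n).
\]
The one-forms $g(n)$ and $g(k)$ are linearly independent --- this is exactly why $F$ is nowhere zero, as already observed --- so $i_vF=0$ forces both coefficients to vanish. Hence
\[
\{v\in\ctm\ :\ i_vF=0\}=\{v\in\ctm\ :\ g(n,v)=0\text{ and }g(k,v)=0\},
\]
which is the common kernel of two independent one-forms and is therefore a complex rank-$2$ subbundle of the rank-$4$ bundle $\ctm$.

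Next I would prove the inclusion $N\subseteq\{v:i_vF=0\}$ using total nullity. Recall from the construction that $N$ is totally null, that is $g(a,b)=0$ for all $a,b\in N$. Both vectors defining $F$ lie in $N$: the vector $n=\xi+i\eta$ is one of the two basis elements of $N$, and $k$ lies in $N$ because $k$ generates $K$ and $\bC\otimes K=N\cap\bar N\subseteq N$. Consequently, for every $v\in N$ total nullity yields $g(n,v)=0$ and $g(k,v)=0$, so $i_vF=0$ by the formula above. This establishes $N\subseteq\{v:i_vF=0\}$.

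Finally, since $N$ is a complex rank-$2$ subbundle and the kernel of $F$ is also a complex rank-$2$ subbundle of $\ctm$, the inclusion just proved is an equality of bundles. I expect the only point requiring care to be the linear independence of $g(n)$ and $g(k)$, which underlies both the kernel computation and the dimension count; but this is already guaranteed by $F\neq 0$. The genuine content of the lemma is thus the total nullity of $N$ together with the observation that $k\in N$, after which the result follows with no further computation.
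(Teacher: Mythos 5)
Your proof is correct and follows essentially the same route as the paper: both directions rest on the nullity relations $g(k,k)=g(k,n)=g(n,n)=0$ for the inclusion $N\subseteq\{v:i_vF=0\}$ and on the pointwise linear independence of $g(n)$ and $g(k)$ to identify the right-hand side with the common kernel of these two one-forms. The only divergence is in the final step: the paper completes the reverse inclusion by decomposing $t=\alpha n+\beta\bar{n}+\gamma k$ and using $g(n,\bar{n})\neq 0$ to force $\beta=0$, while you replace this with a rank count (both $N$ and the common kernel are rank-$2$ subbundles of the rank-$4$ bundle $\ctm$); both arguments are valid.
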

\begin{proof}
We have $g(k,k)=0 $ because $k$ is null; $g(k,n)=0$ because $N\subset \bC \otimes K^\perp$; and $g(n,n)=0$ because $N$ is totally null.  So for our basis $i_kF=0$ and $i_nF=0$.  Thus
\[
N\subset \{ v\in{ \ctm}\ :\ i_vF=0\} .
\]
Now let $t\in \{ v\in{ \ctm}\ :\ i_vF=0\} $.  So
\[
g(n,t)g(k)-g(k,t)g(n)=0.
\]
The independence of $g(n)$ and $g(k)$ implies $t\in \bC\otimes K^\perp$ at some point of $M$.  Thus 
\[
t=\alpha n +\beta \bar{n}+\gamma k
\]
for constants $\alpha , \beta , \gamma $.  Since $g(n,t)=0$ and $g(n,\bar{n})\neq 0$, we see that $\beta =0$ and thus $t\in N$.
\end{proof}
We may use $F$ to define a two-form on $M^\prime$:  Let $t_1$ and $t_2$ be vectors in  $ \ctmprime $.  Lift $t_j $ to a vector $t_j+\alpha _j k$ in $\ctm$.  Then
\beastar
F((t_1+\alpha _1k)\wedge (t_2+\alpha _2 k) )&=& g(n,t_1+\alpha _1k)g(k, t_2 +\alpha _2 k)\\
&-& g(n,t_2+\alpha _2 k)g(k,t_1+\alpha _1k)\\
&=&g(n)\wedge g(k) (t_1\wedge t_2).
\eeastar
So $F$ evaluated on the lift is independent of choices and gives a well-defined two-form on $M^\prime$.  Call this form $F^\prime$.  For $t\in \bar{B}={\bC}\otimes T^{0,1}(M^\prime)$ the natural lift, also called $t$ is in $N$. Thus from the Lemma
\[
t\in \bC\otimes T^{0,1}(M^\prime) \implies  i_tF^\prime=0.
\]
Hence $F^\prime$ 
is    section of the canonical bundle of $M^\prime$ and is nowhere zero.

In summary, the local quotient of a  Lorentzian manifold under  a shear-free congruence of null geodesics is a CR manifold which has a nowhere zero  section of its canonical bundle.  This section being closed is related to Maxwell's equationa and so is a reasonable hypothesis for physicists.  We now repeat Trautman's conjecture.
\begin{conjecture}
If a CR manifold $M^3$ admits a nowhere zero closed section of its canonical bundle, then the CR structure is locally realizable.
\end{conjecture}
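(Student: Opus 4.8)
The plan is to reduce the Conjecture to the construction of two CR functions with independent differentials, and then to extract those functions from the closed section by a complex Darboux argument. First I would invoke the Lemma of Section 1 in the case $n=1$: it suffices to produce complex CR functions $f_1,f_2$ with $df_1\wedge df_2\neq 0$, for then $F=(f_1,f_2)$ realizes $M$ locally in $\bC^2$. So the task is to manufacture two independent CR functions out of the single datum of a closed nowhere zero section $\omega$ of the canonical bundle.

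Next I would analyze the algebraic shape of such a section. Since $\bC\otimes TM$ has complex rank $3$, we have $\omega\wedge\omega\in\Lambda^4=0$, so $\omega$ is decomposable; and because $i_v\omega=0$ for all $v\in\t01$, its kernel is exactly $\t01$, whence $\omega$ is a nowhere zero multiple of $\thetaone\wedge\theta$, where $\{\thetaone,\overline{\thetaone},\theta\}$ is dual to a frame $\{Z,\bar Z,T\}$ with $Z$ spanning $\tonezero$ and $T$ real and transverse. The key observation is that if $\omega$ can be written locally as $\omega=df_1\wedge df_2$, then $i_{\bar Z}(df_1\wedge df_2)=(\bar Z f_1)\,df_2-(\bar Z f_2)\,df_1=0$ forces $\bar Z f_1=\bar Z f_2=0$ (as $df_1,df_2$ are independent), so $f_1,f_2$ are automatically CR; and $df_1\wedge df_2=\omega\neq 0$ supplies exactly the hypothesis of the Lemma. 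Thus the whole Conjecture reduces to a single local statement: a closed, nowhere zero, decomposable complex two-form on $M^3$ is locally a product $df_1\wedge df_2$.

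To prove this complex Darboux statement the natural route is through a primitive. By the Poincaré lemma $\omega=d\lambda$ on a coordinate ball, and using the freedom $\lambda\mapsto\lambda+dg$ I would try to force the complete integrability condition $\lambda\wedge d\lambda=\lambda\wedge\omega=0$. A direct computation with $\omega=\phi\,\thetaone\wedge\theta$ shows that this amounts to a single tangential equation of the form $\bar Z g=h$, where $h$ is built from the $\overline{\thetaone}$-component of $\lambda$ and the coefficient $\phi$. Granting its solvability, $\lambda$ then annihilates $\bar Z$, the kernel of $\lambda$ is an involutive complex rank-two distribution containing $\bar Z$, and producing a first integral $f_2$ with $\lambda=f_1\,df_2$ again requires solving $\bar Z f_2=0$ with $df_2\neq 0$, that is, exhibiting one nonconstant CR function adapted to $\lambda$. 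In every guise the unknown is governed by the tangential operator $\dpartial_b$.

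The hard part will be exactly this solvability of $\bar Z g=h$, i.e.\ the local solvability of the tangential CR operator on $M^3$. This is precisely the phenomenon behind Lewy's nonsolvable equation and behind the fact, quoted in Section 1, that a generic $C^\infty$ three-dimensional CR structure carries no nonconstant CR functions at all; for such a structure $\bar Z$ admits no local right inverse and the argument above must fail, consistently with nonrealizability. The entire force of the hypothesis $d\omega=0$ must therefore be to guarantee that the specific right-hand side $h$ arising here lies in the range of $\bar Z$ — a compatibility that I expect to read off from $d\omega=0$ after expanding it in the coframe $\{\thetaone,\overline{\thetaone},\theta\}$. In the real-analytic category this solvability is automatic (by Cauchy--Kovalevskaya, equivalently by the real-analytic realizability result above), so the genuine difficulty is confined to the smooth case; there I would expect to need either an additional nondegeneracy assumption or the sharper, symmetry-based criterion generalizing Proposition \ref{transverse} that is announced for Section 3.
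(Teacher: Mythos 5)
You should be aware at the outset that this statement is Trautman's Conjecture itself: the paper does not prove it and presents it as open, establishing only the weak version, Theorem \ref{Y}, under the additional hypothesis of $n$ strongly independent CR functions (here $n=1$). Your reduction is correct as far as it goes: on $M^3$ a nowhere zero section of the canonical bundle is $\omega=\phi\,\thetaone\wedge\theta$ with kernel exactly $T^{0,1}$, and if one could write $\omega=df_1\wedge df_2$ locally, then $i_{\bar Z}\omega=0$ (for $\bar Z$ spanning $T^{0,1}$) would force $f_1,f_2$ to be independent CR functions, and the Lemma of Section 1 would give local realizability. So you have faithfully reformulated the Conjecture as a complex Darboux statement for closed decomposable two-forms annihilating $T^{0,1}$.

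The genuine gap is the step you yourself flag with ``granting its solvability.'' Writing $\omega=d\lambda$ and trying to adjust $\lambda\mapsto\lambda+dg$ so that the new primitive annihilates $\bar Z$ leads to the equation $\bar Z g=-\lambda(\bar Z)$, which is exactly a Lewy-type equation for the tangential CR operator; for a generic smooth structure (the examples of \cite{JT}, \cite{N} quoted in Section 1) such equations have no local solutions. The hypothesis $d\omega=0$ is entirely consumed, via the Poincar\'e lemma, in producing the primitive $\lambda$, and you do not exhibit -- nor is one known -- any residual compatibility condition forcing the right-hand side $-\lambda(\bar Z)$ into the range of $\bar Z$. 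Even if that step succeeded, producing the first integral $f_2$ of $\ker\lambda$ is a second solvability problem of the same kind. If either step could be carried out in general, the Conjecture would be a theorem; it is not, and the paper's own contribution is precisely to add the hypothesis of a strongly independent CR function, from which (Theorem \ref{Y}) one builds a complex vector field $\zeta$ transverse to $T^{1,0}\oplus T^{0,1}$ with $\lie_\zeta\omega=0$, deduces $\lie_{\bar\zeta}T^{1,0}=T^{1,0}$, and concludes by the complex version of Proposition \ref{transverse}. As written, your argument assumes exactly the open content of the Conjecture.
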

As we have seen, the converse is true even globally.

\section{\label{higherdim}}

A weak version of the conjecture is true and holds for all dimensions.

Functions satisfying
\[
df_1\wedge\ldots \wedge df_k\neq 0
\]
are called independent.  Functions satisfying 
\[
df_1\wedge\ldots \wedge df_k \wedge d\bar{f_1}\wedge\ldots \wedge d{\bar{f_k}} \neq 0
\]
are called strongly independent.
\begin{example}
The hyperquadric $Q^3\subset \bC ^2$ is defined by $\Im z_2=\abs{z_1}^2$.  The bundle $T^{0,1}$ is generated by 
\[
L=\partial _{\bar{z_1}}-iz\partial _u
\]
where $u=\Re z_2$.  The CR function $f=z$ is strongly independent; The function $f=u+i\abs{z}^2$ is independent, but not strongly independent (at the origin). 
\end{example}
The following theorem preceded the formulation of Trautman's Conjecture and establishes a weak form.
\begin{theorem}\cite{J}\label{Y}
If the CR structure $M^{2n+1}$ has $n$ strongly independent CR functions near $p$ and if the canonical bundle has a closed nowhere zero section then $M^{2n+1}$ is realizable in a neighborhood of $p$.
\end{theorem}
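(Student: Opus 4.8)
The plan is to reduce everything to the realizability criterion stated above (the first Lemma of Section 1): it suffices to produce a single extra CR function $f_{n+1}$ near $p$ with $df_1\wedge\cdots\wedge df_n\wedge df_{n+1}\neq 0$, and the closed nowhere-zero canonical section is exactly what should supply it.

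First I would record two linear-algebra facts. Since each $f_j$ is a CR function, $df_j$ annihilates $T^{0,1}$, so $df_1,\dots,df_n$ lie in the $(n+1)$-dimensional annihilator $(T^{0,1})^\perp\subset\bC\otimes T^*M$; a dimension count then identifies the canonical bundle with $\Lambda^{n+1}(T^{0,1})^\perp$. As $df_1,\dots,df_n$ are independent I may complete them to a local frame of $(T^{0,1})^\perp$ and write the given closed section as
\[
\omega=df_1\wedge\cdots\wedge df_n\wedge\tau,\qquad \tau\in(T^{0,1})^\perp,
\]
with $\tau$ nowhere in the span of the $df_j$ because $\omega\neq 0$. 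Differentiating and using $d(df_j)=0$ gives $0=d\omega=\pm\, df_1\wedge\cdots\wedge df_n\wedge d\tau$, and since $df_1\wedge\cdots\wedge df_n$ is decomposable and nonzero this forces $d\tau$ into the algebraic ideal $\langle df_1,\dots,df_n\rangle$.

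The heart of the matter is a relative Poincar\'e lemma: I want a local function $g$ with $\tau-dg\in\langle df_1,\dots,df_n\rangle$. Granting this, $dg=\tau-\sum a_j df_j$ lies in $(T^{0,1})^\perp$, so $g$ is a CR function, while $df_1\wedge\cdots\wedge df_n\wedge dg=df_1\wedge\cdots\wedge df_n\wedge\tau=\omega\neq 0$; then $f_{n+1}=g$ together with Lemma 1.1 finishes the proof. To solve for $g$ I would use strong independence to introduce local coordinates in which $w_j=f_j$ together with one extra real coordinate $s$ are genuine coordinates (this is precisely where $df_1\wedge\cdots\wedge df_n\wedge d\bar{f_1}\wedge\cdots\wedge d\bar{f_n}\neq 0$ is needed). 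In these coordinates the condition $\tau-dg\in\langle df_j\rangle$ becomes the overdetermined system
\[
\partial_{\bar w_k}g=\psi_k\ \ (k=1,\dots,n),\qquad \partial_s g=R,
\]
for prescribed $\psi_k,R$, and a direct computation shows that $d\tau\in\langle df_j\rangle$ is equivalent to the compatibility relations $\partial_{\bar w_l}\psi_k=\partial_{\bar w_k}\psi_l$ and $\partial_s\psi_k=\partial_{\bar w_k}R$.

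The main obstacle is exactly the solvability of this system, and it is here that genuine analysis enters: because $\langle df_j\rangle$ is generated by complex-valued forms it does not arise from a real foliation, so the relevant relative Poincar\'e lemma is not the elementary one for fibrations but the inhomogeneous Cauchy--Riemann equations. I would solve the $\bar\partial$-part for each fixed $s$ by the Dolbeault--Grothendieck lemma (whose hypothesis is the first compatibility relation), obtaining $g$ up to a $w$-holomorphic ambiguity, and then use the second compatibility relation to check that $\partial_s g-R$ is holomorphic in $w$, so that the ambiguity can be corrected by an integration in $s$ to arrange $\partial_s g=R$ as well. This produces the required $g$ and, via the realizability criterion, the desired local embedding.
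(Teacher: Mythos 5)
Your argument is correct, but it takes a genuinely different route from the paper's. The paper never produces an $(n+1)$-st CR function directly; instead it first proves that realizability is equivalent to the existence of a complex vector field $Y$ transverse to $T^{1,0}\oplus T^{0,1}$ with $\lie _Y T^{1,0}=T^{1,0}$ (sufficiency via a suspension: one spans an integrable almost complex structure on $M\times \bR$ by $T^{0,1}$ and $Y+i\partial _t$ and invokes Newlander--Nirenberg), and then manufactures such a $Y$ from the hypotheses: writing $\omega =f\,\theta\wedge df_1\wedge\cdots\wedge df_n$ and letting $\zeta$ be the field dual to $f\theta$ in the coframe $\{ f\theta ,df_j,d\bar{f}_j\}$ (this is where strong independence enters for the paper), Cartan's formula gives $\lie _\zeta \omega =0$, hence $\lie _\zeta T^{0,1}=T^{0,1}$, and $Y=\bar{\zeta}$ works. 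You instead reduce to Lemma 1.1 and solve for the missing CR function $g$ by a relative Poincar\'e lemma: the factorization $\omega =df_1\wedge\cdots\wedge df_n\wedge\tau$, the deduction that $d\omega =0$ forces $d\tau$ into the ideal $\langle df_j\rangle$, and the two-step integration (Dolbeault--Grothendieck in $w$ with $s$ as a parameter, then a correction by $\int_0^s h$, where $h=\partial _s g_0-R$ is holomorphic in $w$ by the second compatibility relation) are all sound; strong independence is used exactly where you say, to make $(w,\bar{w},s)$ genuine coordinates, and the fact that $\tau -dg$ lies in $(T^{0,1})^\perp$ is what makes $g$ a CR function. The trade-off: the paper's proof is shorter and isolates a criterion of independent interest, but leans on the Newlander--Nirenberg theorem; yours is analytically more elementary (only the one-variable Cauchy transform underlies the Dolbeault lemma) and makes transparent that closedness of $\omega$ is precisely the integrability condition for the overdetermined system defining the last embedding function. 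If you write this up, do note explicitly that the solution of $\bar{\partial}_w g_0=\sum \psi _k\, d\bar{w}_k$ must be chosen to depend smoothly on the parameter $s$, which the integral-formula proof of the Dolbeault--Grothendieck lemma supplies.
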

The proof depends on the following complex version of Proposition \ref{transverse}
\begin{proposition}
$M$ is realizable in a neighborhood of $p$ \iff there exists a complex vector field $Y$ near $p$ such that
\begin{itemize}
\item
$Y$ is transverse to $T^{1,0}\oplus T^{0,1}$
\item
$\lie _YT^{1,0}=T^{1,0}$.
\end{itemize}
\end{proposition}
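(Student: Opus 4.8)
The plan is to use the complex field $Y$ to enlarge $\t01$ to an \emph{elliptic} involutive structure, integrate that structure, and then manufacture the one extra CR function that realizability demands. Recall that by the Lemma above it suffices to produce $n+1$ CR functions with independent differentials. Set $D=\tonezero\oplus\bC Y$. The hypothesis $\lie_Y\tonezero=\tonezero$ means $[Y,\Gamma\tonezero]\subseteq\Gamma\tonezero$, and combined with $[\Gamma\tonezero,\Gamma\tonezero]\subseteq\Gamma\tonezero$ this makes $D$ involutive; transversality of $Y$ to $\tonezero\oplus\t01$ gives $D+\bar D=\ctm$, so $D$ is elliptic, with $D\cap\bar D=\bC E$ for a real line field $E$. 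This passage $\t01\rightsquigarrow D$ is the whole device: the symmetry buys involutivity and the transversality buys ellipticity, the two properties $\t01$ lacks on its own.

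For the ``if'' direction I would first integrate $E$ (trivial, being a line field) and pass to the local leaf space $P^{2n}$. Since $E\subset D$ and $D$ is involutive, $D$ descends to a rank $n$ bundle $V_P\subset\bC\otimes TP$, and the relations $D+\bar D=\ctm$, $D\cap\bar D=\bC E$ force $V_P\oplus\bar V_P=\bC\otimes TP$; thus $V_P$ is an integrable almost complex structure on $P$. By the Newlander--Nirenberg theorem $P$ is a complex manifold, and its holomorphic coordinates pull back, after conjugation, to $n$ CR functions $f_1,\dots,f_n$ on $M$ with $df_1\wedge\dots\wedge df_n\neq0$. This produces only $n$ functions, which is exactly why realizability does not follow from ellipticity alone.

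The main obstacle, and the crux of the proof, is the $(n+1)$-st CR function, transverse to the first $n$. I would work in coordinates $(w_1,\dots,w_n,t)$ with $E=\bR\,\partial_t$ and $w_\alpha$ the pullbacks of the coordinates on $P$, so that $\t01=\mathrm{span}\{\bar L_\alpha\}$ with $\bar L_\alpha=\partial_{w_\alpha}+b_\alpha\partial_t$, and (after adjusting $Y$ by a section of $\tonezero$, which preserves both hypotheses) $Y=c\,\partial_t$ with $c$ nowhere zero; the $\bar w_\alpha$ are the $f_\alpha$. The sought function must satisfy $\bar L_\alpha f=0$ with $\partial_t f\neq0$, and I would build it explicitly as $f=\int^{t}\bar c^{\,-1}\,dt'+h(w,\bar w)$, fixing the integration constant $h$ from a $\partial$-equation $\partial_{w_\alpha}h=-\,b_\alpha\bar c^{\,-1}\big|_{t=0}$ solved by the Dolbeault--Grothendieck lemma. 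Its consistency rests on two identities: symmetry of $\partial_{w_\alpha}(b_\beta\bar c^{-1})$ in $\alpha,\beta$, which comes from integrability of $\t01$, and $\partial_t b_\alpha=\bar L_\alpha\bar c/\bar c$, which I would read off from $[Y,L_\alpha]=0$, the coordinate form of $\lie_Y\tonezero=\tonezero$. Thus integrability supplies one compatibility and the symmetry supplies the other; neither alone suffices, matching the fact that generic CR structures are not realizable. With $f_1,\dots,f_{n+1}$ in hand the Lemma finishes the ``if'' direction.

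For the converse I would run the mirror construction. A realization furnishes conjugate CR coordinate functions that are common first integrals of $\tonezero$, which forces a commuting frame $[L_\alpha,L_\beta]=0$. Writing the desired field as $Y=c\,T$ for a fixed transverse real $T$ reduces $\lie_Y\tonezero=\tonezero$ to a linear first--order system for $c$ whose integrability conditions are exactly this commutativity, solved again by the integrate--then--Dolbeault scheme; transversality is immediate from $c\neq0$. I expect the analytic heart of the whole argument to be the solvability just described: the symmetry is precisely the ingredient that converts the formal (Frobenius) integrability of $\t01$ into genuine local solvability.
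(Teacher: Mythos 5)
Your sufficiency argument is correct but takes a genuinely different route from the paper's. The paper never returns to coordinates: it extends $\tonezero$ and $Y$ to $M\times\bR$, adjoins the line spanned by $Y+i\partial_t$, observes that the resulting bundle $W$ is involutive with $W\cap\bar{W}=\{0\}$ and $W+\bar{W}=\bC\otimes T(M\times\bR)$, and applies Newlander--Nirenberg once in complex dimension $n+1$; $M$ is then a real hypersurface in a complex manifold and realizability is immediate, with no $\dpartial$-equation to solve. You instead keep the elliptic structure $D=\tonezero\oplus\bC Y$ on $M$ itself, quotient by the characteristic line $E$, apply Newlander--Nirenberg on the $2n$-dimensional leaf space, and manufacture the transverse CR function by hand; this is in effect the local integrability theorem for elliptic structures in the sense of Treves. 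Your computations do close up: since $\t01\cap\bC\partial_t=\{0\}$, the conditions $\lie_Y\tonezero=\tonezero$ and involutivity of $\t01$ force $[\bar{Y},\bar{L}_\alpha]=0$ and $[\bar{L}_\alpha,\bar{L}_\beta]=0$ exactly, and these two identities are precisely what make your $t$-primitive well defined and the $\partial$-data closed. The cost of your route is the extra analytic step (Dolbeault--Grothendieck) and the coordinate bookkeeping; the gain is that one sees explicitly where transversality and the symmetry each enter. Neither proof is more elementary, since both rest on Newlander--Nirenberg.

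Your necessity argument, however, has a genuine gap. For an arbitrary fixed real transverse $T$ the ansatz $Y=cT$ cannot work: $[cT,L]=c[T,L]-(Lc)T$, and if $[T,L]$ has a nonzero $\t01$-component no scalar factor $c$ removes it, so you must either add to $Y$ a section of $\cth$ or choose $T$ adapted to the realization. Even with an adapted choice (say $T=\partial_u$ in graph coordinates, where $L_\alpha=\partial_{z_\alpha}+\lambda_\alpha\partial_u$ and $[T,L_\alpha]$ is a multiple of $T$), the condition becomes the system $L_\alpha \log c=\partial_u\lambda_\alpha$, which prescribes only the $L_\alpha$-derivatives of $\log c$: there is no $t$-direction to integrate first and no reduction to a Dolbeault problem, and formal compatibility of such tangential CR systems does not imply local solvability --- this is exactly the Lewy phenomenon that the whole subject is about, so it cannot be invoked as a solvability principle. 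The direction is nonetheless the easy one, and the paper's device sidesteps the issue entirely: define $\bar{Y}$ by $dz_{n+1}(\bar{Y})=1$ and $dz_j(\bar{Y})=d\bar{z}_j(\bar{Y})=0$ (explicitly a multiple of $\partial_u$), and conclude $\lie_{\bar{Y}}\t01=\t01$ from $\lie_{\bar{Y}}\bigl(dz_1\wedge\cdots\wedge dz_{n+1}|_M\bigr)=0$, which is immediate from Cartan's formula. You should replace your solvability claim with this explicit field.
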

Thus the existence of a real vector field such that $\lie _v T^{1,0}=T^{1,0}$ is very special (since most realizable CR structures do not have such a vector field) but the existence of such a complex vector field characterizes realizability.
\begin{proof}
We first prove the necessity.  So assume $M$ is realizable near $p$.  Without loss of generality we assume $p=0$ and $M$ is given as
\[
M=\{ (z_1,\ldots , z_{n+1}):\Im {z_{n+1} =\rho (z_1,\ldots ,z_n,\bar{z}_1,\ldots ,\bar{z}_{n-1},\Re{z_{n+1}}} \} .
\]
Define $\bar{Y}$ by 
\be \label{YY}
 dz_{n+1}(\bar{Y})=1,\ \ \ \ \ dz_j(\bar{Y})=d\bar{z}_j(\bar{Y})=0,\ \ 1\leq j\leq n.  
\ee
Note that $\bar{Y}$ (and also $Y$) is transverse to $T^{1,0}\oplus T^{0,1}$.  Set
\[
\omega = dz_1\wedge \ldots \wedge dz_{n+1}|_M.
\]
This is a nowhere zero closed section of the canonical bundle.  As a consequence of Cartan's formula
\[
\lie _v =di_v+i_vd
\]
we have
\[
\lie _{\bar{Y}}\omega =d(i_{\bar{Y}}\omega )+ i_{\bar{Y}}d\omega =0.
\]
This implies $\lie _{\bar{Y}} T^{0,1}=T^{0,1}$
 and so also
\[ \lie _Y T^{1,0}=T^{1,0}.
\]
Conversely, we will assume that $\lie _YT^{1,0}=T^{1,0}$ with $Y$ transverse to $T^{1,0}\oplus T^{0,1}$, and show that $M$ is locally realizable.  This is just a slight modification of a standard proof of Proposition \ref {transverse}.  Extend $Y$ and each of the vectors in $\tonezero$ to $\bC \otimes T(M\times \bR )$ by taking them constant in the $\bR $ direction. Let $Y$ still denote this extension and let $V$ denote the extension of the bundle $\tonezero$.  Set $Z$ to be the complex line bundle spanned by $Y+i\frac \partial {\partial t}$ where $t$ is the natural parameter for $\bR$.  Then  $W=V\oplus Z$ satisfies
\[
W\cap \bar{W}=\{ 0\} \mbox{    and    }W+\bar{W} = \bC\otimes T(M\times \bR) .
\]
Finally, as is easily seen, $W$ is closed under the commutation of vector fields,
\[
[\Gamma W,\Gamma W ] \subset \Gamma W.
\]
Thus $W$ satisfies the conditions of the Newlander-Nirenberg Theorem \cite {NN} and so defines a complex structure on $M\times \bR$.  Since $W\cap \bC\otimes TM=\tonezero(M\times R)$, the CR structure induced on $M$ is the one we started with.
\end{proof}
All that is left to do in the proof of Theorem \ref {Y} is to show that if $f_1,\ldots , f_n$ are CR functions on $M^{n+1}$ with 
\[
df_1\wedge\ldots \wedge d\bar{f}_n\neq 0
\]
and if $\omega$ is a nowhere zero section of the canonical bundle with
\[
d\omega =0
\]
then there is a complex vector field $Y$ with
\begin{itemize}
\item
$Y$ transverse to $T^{1,0}\oplus T^{0,1}$
\item
$\lie _YT^{1,0}=T^{1,0}$.
\end{itemize}
We just use the closed section to find a replacement for $dz_{n+1}$ in \eqref{YY}.  
Because we prefer to work with the canonical bundle and not its conjugate, we start, as in the Proposition,  by defining a vector field $\zeta$ and then let $Y=\bar{\zeta}$. 
Towards this end, let $\theta$ be a nowhere zero one-form annihilating $\tonezero\oplus \bar{\tonezero}$.  Then
\[
\theta \wedge df_1\wedge \ldots \wedge df_n
\]
is a nowhere zero section of the canonical bundle.  This bundle is one dimensional, so
\[
\omega = f \theta \wedge df_1\wedge \ldots \wedge df_n .
\]
Define $\zeta$ by 
\[
f\theta (\zeta)=1,\ \ \ \ df_j(\zeta)=0,\ \ \ d\bar{f}_j(\zeta)=0
\]
$\zeta$ can be thought of as a complex version of the Reeb vector field.  In particular, it is transverse to $T^{1,0}\oplus T^{0,1}$.

We have
\begin{eqnarray*}
\lie _\zeta \omega &=&d(i_\zeta\omega) + i_\zeta d\omega\\
&=&d(f\theta (\zeta) )df_1\wedge\ldots \wedge df_n)+i_\zeta d\omega\\
&=&0.
\end{eqnarray*}
\begin{lemma}
If 
$ \lie _\zeta \omega =0 $ then $ \lie _\zeta T^{0,1}=T^{0,1}$.
\end{lemma}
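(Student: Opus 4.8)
The plan is to reduce the lemma to a single characterization of $T^{0,1}$ together with one standard commutator identity. I first note that the statement $\lie _\zeta T^{0,1}=T^{0,1}$ means precisely that $[\zeta ,L]\in T^{0,1}$ for every local section $L$ of $T^{0,1}$. The key observation I would establish is
\[
T^{0,1}=\{ v\in \ctm : i_v\omega =0\} .
\]
The inclusion $\subseteq$ is immediate from the definition of the canonical bundle. For the reverse inclusion I would use that $\omega$ is a nowhere zero \emph{decomposable} $(n+1)$-form; indeed, here $\omega = f\theta \wedge df_1\wedge \ldots \wedge df_n$ with $\theta ,df_1,\ldots ,df_n$ linearly independent (this is exactly why $\theta \wedge df_1\wedge \ldots \wedge df_n$ is a nowhere zero section). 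For a decomposable form with independent factors, $i_v\omega =0$ forces $\theta (v)=df_1(v)=\ldots =df_n(v)=0$, so the annihilator is the common kernel of $n+1$ independent one-forms on a $(2n+1)$-dimensional manifold and hence has dimension $n$. As $T^{0,1}$ already lies in this kernel and is itself of dimension $n$, the two agree.

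Granting the characterization, the lemma is a one-line computation. Fix a local section $L$ of $T^{0,1}$; I must show $i_{[\zeta ,L]}\omega =0$. I would apply the identity
\[
i_{[\zeta ,L]}\omega =\lie _\zeta (i_L\omega )-i_L(\lie _\zeta \omega ).
\]
The first term vanishes since $i_L\omega =0$ (because $L\in T^{0,1}$ and $\omega$ is a section of the canonical bundle), and the second vanishes by the hypothesis $\lie _\zeta \omega =0$. Therefore $i_{[\zeta ,L]}\omega =0$, so $[\zeta ,L]\in T^{0,1}$ by the characterization, which is exactly the assertion.

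I expect the only real content to be the reverse inclusion $\{ v:i_v\omega =0\}\subseteq T^{0,1}$; once this is in place the Lie-derivative step is purely formal. The subtlety is that $i_{[\zeta ,L]}\omega =0$ by itself only places $[\zeta ,L]$ in the annihilator of $\omega$, and without the equality one could not conclude that the bracket actually lands in $T^{0,1}$. This is precisely where the decomposability of $\omega$ together with the dimension count $\dim T^{0,1}=n$ is doing the work.
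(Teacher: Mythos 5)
Your proof is correct and follows essentially the same route as the paper: your identity $i_{[\zeta ,L]}\omega =\lie _\zeta (i_L\omega )-i_L(\lie _\zeta \omega )$ is exactly the one the paper uses, written there as $\lie _\zeta i_v\omega =i_{\lie _\zeta v}\omega +i_v\lie _\zeta \omega$. The one point where you go beyond the paper is in justifying the reverse inclusion $\{ v:i_v\omega =0\}\subseteq T^{0,1}$ by the decomposability of $\omega =f\theta \wedge df_1\wedge \ldots \wedge df_n$ and the dimension count; the paper tacitly assumes this when it passes from $i_{\lie _\zeta v}\omega =0$ to $\lie _\zeta v\in T^{0,1}$, so your addition is a correct and worthwhile completion of a step left implicit there.
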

\begin{proof}
We have for all vector fields $\zeta$ and $v$ and all forms $\omega$
\[
\lie _\zeta i_v \omega  = i_{\lie _\zeta v}\  \omega+i_v \lie _\zeta \omega .
\]
So, if $v\in T^{0,1}$, hence $i_v\omega=0$, and $\lie _\zeta \omega =0$, then
\[
i_ {\lie _\zeta v}\ \omega =0
\]
and so $\lie _\zeta v$ is also in $T^{0,1}$.
\end{proof}
This Lemma has a partial converse:  If $\lie _\zeta T^{0,1}=T^{0,1}$ then $\lie \omega =\alpha \omega$ for some function $\alpha$.

Finally, we set $Y={\bar{\zeta}}$.  Thus, $Y$ is transverse to $T^{1,0}\oplus T^{0,1}$ and
\[
\lie _Y \tonezero = \bar{\lie _\zeta T^{0,1}} = \tonezero 
\]
and we are done.

\bibliographystyle{amsplain}

\end{document}